\documentclass[a4paper,11pt]{amsart}

\usepackage[T1]{fontenc}
\usepackage[utf8]{inputenc}
\usepackage[english]{babel}
\usepackage{amssymb}
\usepackage{amsmath}
\usepackage{amsthm}
\usepackage{mathtools}
\usepackage{enumitem}
\usepackage[margin=30mm]{geometry}
\usepackage{hyperref}

\newtheorem{thm}{Theorem}[section]
\newtheorem{lemma}[thm]{Lemma}

\newtheorem{prop}[thm]{Proposition}
\theoremstyle{definition}

\theoremstyle{remark}

\newtheorem{question}[thm]{Question}

\newcommand{\bF}{\mathbf{F}}
\newcommand{\bN}{\mathbf{N}}
\newcommand{\bZ}{\mathbf{Z}}
\newcommand{\bR}{\mathbf{R}}
\newcommand{\cA}{\mathcal{A}}
\newcommand{\cE}{\mathcal{E}}
\newcommand{\cG}{\mathcal{G}}
\newcommand{\cO}{\mathcal{O}}
\newcommand{\cR}{\mathcal{R}}
\newcommand{\cU}{\mathcal{U}}
\newcommand{\abs}[1]{\lvert#1\rvert}
\newcommand{\acts}{\curvearrowright}
\newcommand{\ind}{\mathbf{1}}
\newcommand{\MALG}{\mathrm{MALG}}
\DeclareMathOperator{\Aut}{Aut}
\DeclareMathOperator{\Sch}{Sch}
\DeclareMathOperator{\id}{id}
\DeclareMathOperator{\supp}{supp}

\usepackage[draft,inline,nomargin,index]{fixme}

\fxsetup{theme=color,mode=multiuser}
\FXRegisterAuthor{kw}{akw}{\color{red}KW}
\definecolor{DarkGreen}{RGB}{30,150,100}
\FXRegisterAuthor{ct}{act}{\color{DarkGreen}CT}

\title{Orbit equivalence and total weak mixing of free group actions}
\author[Konrad Wr\'{o}bel]{Konrad Wr\'{o}bel}
\address[Konrad Wr\'{o}bel]{Department of Mathematics, The University of Texas at Austin, USA}
\email{konrad.wrobel@austin.utexas.edu}

\subjclass[2020]{37A20, 37A25, 28D15}
\begin{document}

\begin{abstract}
We prove that the orbit equivalence class of every free ergodic probability-measure-preserving (pmp) action of a free group contains a totally weak mixing action. 
Equivalently, every ergodic treeable pmp equivalence relation of cost $n\in\bN\cup\{\infty\}$ is generated by a free totally weak mixing action of $\bF_n$. This answers a question of Miller and Tserunyan.

The proof goes by considering a Polish space of edge slidings along a fixed mixing transformation and proving that for every $w\not=e\in\bF_n$ the set of edge slidings that produce an action with $w$ weakly mixing forms a comeager set.
\end{abstract}

\maketitle

\section{Introduction}

Let $G\acts(X,\mu)$ be a pmp action of a countable group. The action is \textbf{weak mixing} if the diagonal action $G\acts (X\times X,\mu\otimes\mu)$ is ergodic, and it is \textbf{totally weak mixing (totally ergodic)} if its restriction to every infinite subgroup of $G$ is weak mixing (ergodic). Mixing actions are totally weak mixing, and totally weak mixing actions are totally ergodic.

Fix $n\in\bN\cup\{\infty\}$ and let $\bF_n$ be the free group on $n$ generators. By Gaboriau's theorem on cost of treeings \cite[Th\'eor\`eme IV.1]{Ga00}, the orbit equivalence relation of a free pmp action of $\bF_n$ is treeable of cost $n$, and Hjorth \cite{Hjorth:CostAttained} proved the converse: every ergodic treeable pmp equivalence relation of cost $n$ is generated by a free pmp action of $\bF_n$. Miller and Tserunyan \cite[Theorem~1.3]{MillerTserunyan} strengthened this by producing such an action in which each of the $n$ free generators acts ergodically, and asked \cite[Question~1.8]{MillerTserunyan} whether one can further ask that every nontrivial group element act ergodically. We answer this in the positive, showing that every nontrivial element can in fact be taken to be weak mixing.

\begin{thm}\label{thm:main}
Let $n\in\bN\cup\{\infty\}$ and let $\alpha:\bF_n\acts(X,\mu)$ be a free ergodic pmp action. Then there is a free totally weak mixing pmp action $\beta:\bF_n\acts(X,\mu)$ with $\cR_\beta=\cR_\alpha$.

Equivalently, every ergodic treeable pmp equivalence relation of cost $n$ is generated by a free totally weak mixing pmp action of $\bF_n$.
\end{thm}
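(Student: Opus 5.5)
The plan is a Baire category argument inside a Polish space of actions generating $\cR=\cR_\alpha$, as the abstract indicates. First I will reduce to a convenient normal form. By Gaboriau's and Hjorth's results, $\cR$ is generated by a free action of $\bF_n$ with free generators $a_1,a_2,\dots$. After passing to an orbit-equivalent action (a Hjorth/Miller--Tserunyan type modification), I will arrange that the first generator $T=a_1$ acts as a \emph{mixing} transformation of $(X,\mu)$. Then $\cR$ is generated by $T$ together with the remaining generators $S_2,S_3,\dots$, and the whole family acts freely as $\bF_n$.

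\textbf{Edge slidings.} An edge sliding replaces $S_i$ by $x\mapsto T^{f(x)}S_i T^{g(x)}x$, with measurable cocycle-type maps into $\bZ$ chosen so that the result is again a measure-preserving automorphism. More concretely, I will slide the endpoints of the $S_i$-edges along $T$-orbits. Such a modification keeps the graphing a treeing of $\cR$, with the $T$-line unchanged. Consequently the new system, together with $T$, still generates a free action of $\bF_n$ with the same orbit relation: sliding edges of a treeing along another part of the treeing preserves both treeability and the generated relation. I will topologize the space $\mathcal{S}$ of such slidings, for instance as a closed subspace of $\Aut(X,\mu)^{\bN}$ or as a product of spaces of $\bZ$-valued functions modulo null sets with the convergence-in-measure metric. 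I need this space to be Polish and the map from a sliding to the resulting action $\bF_n\acts X$ to be continuous.

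\textbf{Genericity.} For each fixed $w\neq e$, I want to show that the set $\mathcal{W}_w$ of slidings for which $w$ acts weakly mixing is comeager. It is $G_\delta$, because weak mixing of a single automorphism $U$ is the condition $\forall A,B$ (from a countable dense family) $\forall k\ \exists m$ with $\abs{\mu(U^mA\cap B)-\mu(A)\mu(B)}<1/k$ along some sequence of $m$. Equivalently it is $\inf_m\sum_{i,j}\abs{\mu(U^mA_i\cap A_j)-\mu(A_i)\mu(A_j)}=0$ for finite families, which is an intersection of open conditions since $U=w^\beta$ depends continuously on the sliding. Density is the main obstacle. Given a sliding $s$ and a neighborhood of it, I will perturb $s$ on a set of small measure so that some power $w^m$ nearly mixes a given finite partition. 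For this I write $w$ as a reduced, cyclically reduced word (conjugation does not affect weak mixing). If $w$ is a power of $T$, mixing of $T$ already gives the result. Otherwise $w$ involves some $S_i$. On a Rokhlin-type tower I then modify the slide amounts so that long stretches of the $w$-orbit follow long runs along $T$-orbits, with large freely chosen shifts $T^{N}$. Mixing of $T$ then decorrelates $w^mA$ from $B$ up to small error. The key technical points are these: freeness of the action must be preserved (it is, since the treeing structure is preserved); the perturbation must be small in the Polish metric; and the inserted large $T$-shifts must actually occur along a positive-measure portion of the $w^m$-path, with independent choices. I expect this combinatorial control of the reduced word after sliding to be the hardest part.

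Finally, $\bigcap_{w\neq e}\mathcal{W}_w$ is comeager and hence nonempty. Any sliding in it gives a free action $\beta$ with $\cR_\beta=\cR_\alpha$ in which every nontrivial element is weak mixing. Every infinite subgroup of $\bF_n$ contains a nontrivial cyclic subgroup $\langle w\rangle$, and weak mixing of $w$ implies weak mixing of any subgroup containing it. Therefore $\beta$ is totally weak mixing. The equivalent reformulation in terms of treeable relations follows from Hjorth's theorem.
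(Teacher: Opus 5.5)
Your overall architecture matches the paper's. You make the first generator mixing (Miller--Tserunyan only give an ergodic generator, so you also need Dye's theorem to replace it by a mixing one with the same orbits). You work in the Polish space of edge slidings, essentially $[\alpha(s_1)]^{n-1}$ with the uniform metric. You show each weak mixing set is a dense $G_\delta$, and intersect. The gap is the density step, which is where essentially all the difficulty lies. You leave it as a sketch, and the sketch aims at more than is needed with a mechanism that does not work as stated.

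Your open sets demand near-independence, $\sum_{i,j}\abs{\mu(U^mA_i\cap A_j)-\mu(A_i)\mu(A_j)}<\epsilon$, from a perturbation that is small in the uniform metric. So a small-measure set of modified edges must be crossed, cleanly, by almost every $w^m$-path. But a slide in $[T]$ is a bijection: moving the origin at $R_qx$ to $T^LR_qx$ forces the edge already at $T^LR_qx$ to move as well. Suppose the modified edges sit at the base of a Rokhlin tower covering most of $X$ and the shift is a fixed large $L$. Then mixing places these partner edges at essentially random heights of the tower. A positive proportion of spliced paths therefore meet a second modified edge within $m$ steps. The clean formula for $w^m$ then fails on a set of measure bounded away from zero, and your sketch gives no way to control these multi-jump paths. (Separately, your single-pair formulation $\inf_m\abs{\mu(U^mA\cap B)-\mu(A)\mu(B)}=0$ does not characterize weak mixing, since irrational rotations satisfy it; your finite-family version does.)

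The paper sidesteps this in two ways. First, Proposition~\ref{prop:approx} only needs the one-sided bound $\mu(T^NA_i\cap B_i)>C\mu(A_i)\mu(B_i)$ for two pairs at once, with $C=C_w$ fixed in advance. So only a definite fraction of the correlation mass must be spliced cleanly, not almost all of it. Second, every possible interference, including the partner edges, is recorded in a conflict graph $\cG$ of degree $<25mN$. Hatami--Lov\'asz--Szegedy then yields a $\cG$-independent set $D$ with $\mu(D)<\frac1{100mN}$ and $\int_DF_i>\frac1{1000mN}\int_XF_i$, where $F_i$ sums the $N$ possible splice contributions. Let $V\in[\alpha(s_1)]$ be the involution swapping $R_qx\leftrightarrow\alpha(s_1)^LR_qx$ on $R_qD$. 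Then $\beta_{\Phi'}(w)^NT^{-r}x=T^{N-r}R_q^{-1}\alpha(s_1)^LR_qx$ holds exactly for $x\in D$, and mixing of $\alpha(s_1)$ gives $C=\frac1{2000m}$. Some device of this kind is the missing idea in your density argument: it chooses where to slide so that splices do not interfere, while still capturing a fixed proportion of the correlation.
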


Since $\bF_n$ is torsion-free, an action of $\bF_n$ is totally weak mixing precisely when each nontrivial group element acts by a weakly mixing transformation. In his thesis, Dai \cite[Theorems 4.2.1 and 4.2.9]{Dai:thesis} obtained partial progress towards \cite[Question~1.8]{MillerTserunyan}: given a free ergodic pmp action $\alpha$, he produced a free orbit equivalent action $\beta$ so that $\beta(s_1)$, $\beta(s_2)$ and $\beta(s_1s_2)$ are all ergodic, along with other configurations, such as ensuring $\beta(s_1^2)$, $\beta(s_2^2)$, and $\beta(s_1s_2s_1s_2)$ are all ergodic.

We now briefly describe the strategy. By \cite[Theorem 1.3]{MillerTserunyan} together with Dye's theorem \cite{Dye}, the relation $\cR_\alpha$ is generated by a free action of $\bF_n=\langle s_1,\dots, s_n\rangle$ whose first generator is mixing, so we may assume that $\alpha(s_1)$ is mixing. Keeping $\alpha(s_1)$ and the relation fixed, we may then slide the remaining generators along the orbits of $\alpha(s_1)$: for a tuple $\Phi=(\phi_2,\dots,\phi_n)$ of elements of the full group $[\alpha(s_1)]$, the transformations $\alpha(s_1)$ and $\alpha(s_i)\phi_i$ generate a new free action $\beta_\Phi$ of $\bF_n$ with the same orbits as $\alpha$. This gives a Polish space $[\alpha(s_1)]^{n-1}$ of free actions generating $\cR_\alpha$ and the theorem follows once we prove that for each nontrivial $w\in\bF_n$ the set of $\Phi$ for which $\beta_\Phi(w)$ is weakly mixing is comeager.  As is typical of genericity arguments, all of the difficulty is concentrated in Lemma \ref{lem:density} which proves the density of particular open sets that provide a quantitative approximation to weak mixing of $\beta_\Phi(w)$. 

The genericity aspect of this argument can be compared with several existing results. Most directly, Kerr and Pichot \cite{KerrPichot} showed that weak mixing actions form a comeager subset of $A(\Gamma,X,\mu)$ for every countable discrete group $\Gamma$ without property (T). For $\bR$-flows, Kerr, Li, and Pichot \cite{KerrLiPichot} showed that totally weak mixing flows are comeager. 
In a different direction, Bowen \cite{Bowen:WeakDensity} showed that every orbit equivalence class of a free pmp action of $\bF_n$ is weakly dense in $A(\bF_n,X,\mu)$. These results all concern the weak topology on the space of actions, whereas here the orbit equivalence relation is fixed and the genericity argument takes place in the uniform topology on the space of edge slidings.

We note a natural follow-up question which remains open.
\begin{question}[Tucker-Drob] \label{question:Mixing}
Can $\beta$ in Theorem \ref{thm:main} be taken to be mixing?
\end{question}

\section{Preliminaries}
\subsection{Weak mixing}
Throughout $(X,\mu)$ is a standard nonatomic probability space and $\MALG$ denotes its measure algebra.

A transformation $T\in \Aut(X,\mu)$ is \textbf{weak mixing} if for every quadruple $A_0,A_1,B_0,B_1\in \MALG$ of positive measure sets there is an $N\ge 1$ with $\mu(T^NA_i\cap B_i)>0$ for $i\in\{0,1\}$. Towards the genericity argument, we require a countable set of (open) conditions that imply weak mixing. 

\begin{prop}\label{prop:approx}
Let $C>0$, let $\cA\subseteq\MALG$ be a countable dense subset of positive measure sets with respect to the metric $\mu(A\triangle B)$, and let $T\in\Aut(X,\mu)$. Suppose that for all $A_0,A_1,B_0,B_1\in\cA$ there is $N\ge1$ with $\mu(T^NA_i\cap B_i)>C\mu(A_i)\mu(B_i)$ for $i\in\{0,1\}$. 

Then $T$ is weak mixing.
\end{prop}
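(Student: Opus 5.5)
The plan is a direct approximation argument: we verify the definition of weak mixing for arbitrary positive measure sets by replacing them with nearby members of $\cA$. The only point needing care is that the error from approximating must be small compared with the lower bound $C\mu(A_i')\mu(B_i')$. This lower bound is not fixed in advance, since it depends on the approximants. It is, however, bounded below by a quantity depending only on the original sets, so choosing the approximation fine enough will suffice.

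Fix positive measure sets $A_0,A_1,B_0,B_1\in\MALG$. Let
\[
m=\min_i\min\{\mu(A_i),\mu(B_i)\}>0 .
\]
Choose $\delta>0$ with $\delta<m/2$ and
\[
2\delta< C\,(m-\delta)^2 .
\]
This is possible because the right side tends to $Cm^2>0$ as $\delta\to0$, while the left side tends to $0$. By density of $\cA$, pick $A_i',B_i'\in\cA$ with $\mu(A_i'\triangle A_i)<\delta$ and $\mu(B_i'\triangle B_i)<\delta$. Then $\mu(A_i')>\mu(A_i)-\delta\ge m-\delta$, and similarly $\mu(B_i')>m-\delta$.

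Apply the hypothesis to the quadruple $A_0',A_1',B_0',B_1'$ to get $N\ge1$ with
\[
\mu(T^NA_i'\cap B_i')>C\mu(A_i')\mu(B_i')>C(m-\delta)^2
\]
for both $i\in\{0,1\}$. Next compare with the original sets. We have
\[
T^NA_i'\cap B_i'\subseteq (T^NA_i\cap B_i)\cup T^N(A_i'\setminus A_i)\cup(B_i'\setminus B_i).
\]
Since $T$ is measure preserving, this gives
\[
\mu(T^NA_i\cap B_i)\ge \mu(T^NA_i'\cap B_i')-\mu(A_i'\setminus A_i)-\mu(B_i'\setminus B_i)>C(m-\delta)^2-2\delta>0 .
\]
Hence the same $N$ works simultaneously for $i=0$ and $i=1$, which is exactly the definition of weak mixing given above.

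The only potential obstacle is the one noted at the start: the error must beat the threshold $C\mu(A_i')\mu(B_i')$, which depends on the approximants. The uniform lower bound $m-\delta$ on the measures of the approximants resolves this. Note that the constant $C$ plays no essential role beyond being positive; any fixed positive constant would do.
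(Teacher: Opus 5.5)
Your proof is correct and follows essentially the same route as the paper. You approximate by members of $\cA$ within a tolerance depending on the original sets, apply the hypothesis, and absorb the $2\delta$ error using measure preservation. The only cosmetic difference is the choice of tolerance: the paper takes $\delta=\min_i\frac{C\mu(A_i)\mu(B_i)}{4(C+1)}$ to get the quantitative bound $\tfrac12C\mu(A_i)\mu(B_i)$, whereas you use the uniform lower bound $m-\delta$ on all four measures, which suffices for positivity.
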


\begin{proof}
Fix positive measure sets $A_0,A_1,B_0,B_1$ and put $\delta:=\min_i\frac{C\mu(A_i)\mu(B_i)}{4(C+1)}$. Choose $A_i',B_i'\in\cA$ with $\mu(A_i\triangle A_i')<\delta$ and $\mu(B_i\triangle B_i')<\delta$. By assumption, there is an $N\ge 1$ so that $\mu(T^NA_i'\cap B_i')>C\mu(A_i')\mu(B_i')$ for $i\in\{0,1\}$. 
Then
\begin{align*}
\mu(T^NA_i\cap B_i)&\ge\mu(T^NA_i'\cap B_i')-2\delta>C\mu(A_i')\mu(B_i')-2\delta\\
&\ge C\mu(A_i)\mu(B_i)-2\delta(C+1)\ge\tfrac12C\mu(A_i)\mu(B_i)>0
\end{align*}
verifying that $T$ is weak mixing.
\end{proof}

An action of a countable group is \textbf{totally weak mixing (totally ergodic)} if its restriction to every infinite subgroup of $G$ is weak mixing (ergodic). This definition agrees with the one given in \cite{Furstenberg:RecurrenceBook,Bergelson:PET,KerrLiPichot} in the category of torsion-free groups. 

\subsection{Full groups and edge slidings}
If $\cR$ is a pmp countable Borel equivalence relation on $(X,\mu)$, then its \textbf{full group} $[\cR]$, given by
\[
[\cR]=\{T\in\Aut(X,\mu):(Tx,x)\in\cR\text{ for a.e.\ }x\},
\]
is a Polish group when equipped with the \textbf{uniform metric} $d_u(S,T)=\mu(\{x:Sx\neq Tx\})$ (see for instance \cite[Proposition 3.2]{book:GlobalAspects}). For $T\in\Aut(X,\mu)$ we write $\cR_T$ for the orbit equivalence relation of $T$ and abbreviate $[T]:=[\cR_T]$.

Fix a free pmp action $\alpha:\bF_n\acts(X,\mu)$. For a tuple $\Phi=(\phi_2,\dots,\phi_n)\in[\alpha(s_1)]^{n-1}$, let $\beta_\Phi:\bF_n\acts(X,\mu)$ be the action determined by $\beta_\Phi(s_1):=\alpha(s_1)$ and $\beta_\Phi(s_i):=\alpha(s_i)\phi_i$ for $2\leq i\leq n$. We equip $[\alpha(s_1)]^{n-1}$ with the product of the uniform topologies, so that it is a Polish space and hence a Baire space.

Let $S$ be the free generating set of $\bF_n$. The \textbf{Schreier graphing} $\Sch(\alpha, S)$ is the labeled Borel graph on $(X,\mu)$ with a directed edge labeled by $s_i$ with origin $x$ and terminus $\alpha(s_i)x$ for each $i$ and almost every $x\in X$.
In the case where the action is a $\bZ$-action with generator $T$, we denote by $\Sch(T)$ the graph with a directed edge with origin $x$ and terminus $Tx$ for almost every $x\in X$.

\begin{lemma}\label{lem:slide}
For every $\Phi\in[\alpha(s_1)]^{n-1}$ the action $\beta_\Phi$ is free and $\cR_{\beta_\Phi}=\cR_\alpha$.
\end{lemma}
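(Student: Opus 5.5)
The plan is to compare the Schreier graphing $\Sch(\beta_\Phi,S)$ with $\Sch(\alpha,S)$ and show that both generate the same relation and that the former is still a treeing.

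First, the orbit relation. Each $\phi_i\in[\alpha(s_1)]$ satisfies $\phi_i x=\alpha(s_1)^{k_i(x)}x$ for a measurable cocycle $k_i:X\to\bZ$. Hence $\beta_\Phi(s_i)x=\alpha(s_i)\alpha(s_1)^{k_i(x)}x\in\alpha(\bF_n)x$, so $\cR_{\beta_\Phi}\subseteq\cR_\alpha$. Conversely, $\alpha(s_1)=\beta_\Phi(s_1)$, and $\alpha(s_i)=\beta_\Phi(s_i)\phi_i^{-1}$ with $\phi_i^{-1}\in[\beta_\Phi(s_1)]$, so each $\alpha(s_i)y$ lies in the $\beta_\Phi$-orbit of $y$. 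Therefore $\cR_\alpha\subseteq\cR_{\beta_\Phi}$. (For $n=\infty$ the same argument works generator by generator.)

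Second, freeness. This amounts to showing that no nontrivial reduced word $w$ fixes a positive-measure set of points under $\beta_\Phi$. Suppose instead that $\beta_\Phi(w)x=x$ for $x$ in a non-null set, where $w=t_m\cdots t_1$ is reduced with $t_j\in S^{\pm1}$. The $\beta_\Phi$-path from $x$ is then a closed walk $x=x_0,x_1,\dots,x_m=x$, each step being a $\beta_\Phi(t_j)$-edge. Replace every $\beta_\Phi(s_i)$-edge with $i\ge2$ by the $\alpha$-path that first follows $\alpha(s_1)^{\pm1}$-edges for $k_i$ steps and then takes one $\alpha(s_i)$-edge; use the reversed path for inverse letters. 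This converts the closed walk into a closed walk in $\Sch(\alpha,S)$.

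Since $\alpha$ is free, $\Sch(\alpha,S)$ is a.e.\ a forest with labels. A closed walk in a tree must backtrack, so after reducing, its label word is trivial in $\bF_n$. The main obstacle is showing that the substituted word cannot reduce to $e$ when $w\neq e$. The key bookkeeping point is that every $s_i^{\pm1}$ letter with $i\ge2$ survives free reduction. Between consecutive non-$s_1$ letters of the substituted word, only powers of $s_1$ occur. An $s_i$ letter could cancel against an adjacent $s_i^{-1}$ only if the $s_1$-power between them is zero. In $w$, such a pair comes either from adjacent letters $s_i^{-1}s_i$ or $s_is_i^{-1}$, or from $s_i^{\pm1}s_1^{p}s_i^{\mp1}$ with $p\neq0$ a genuine power of $s_1$ (this requires checking the orientation of the exponents).

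The adjacent-letter case is excluded because $w$ is reduced. So the remaining case to handle is $w$ containing $\cdots s_i\,s_1^{p}\,s_i^{-1}\cdots$, where the inserted $s_1$-powers $k_i(\cdot)$ may cancel $p$. If the rewritten closed walk really does backtrack there, then at the intermediate vertex $y=\beta_\Phi(s_i)^{-1}\cdots$ we get $\alpha(s_1)^{p}$ moving one point of the form $\phi_i^{-1}\alpha(s_i)^{-1}z$ to another point $\phi_i^{-1}\alpha(s_i)^{-1}z'$ with $z=z'$. Since $\phi_i$ is injective, this forces $\alpha(s_1)^p z''=z''$ on a positive-measure set, which contradicts freeness of $\alpha(s_1)$ when $p\neq0$. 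Alternatively, the same argument can be phrased by treating $\beta_\Phi(s_1^{p})$-segments as the corresponding tree geodesics.

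Carrying this out shows that the $s_i$ letters ($i\ge2$) of $w$ persist in the reduced word. Hence either $w$ is a power $s_1^p$ with $p\ne0$, which is free because $\alpha(s_1)$ is, or the substituted word is nontrivial, contradicting the tree structure. I expect the cancellation analysis at the $s_i^{\pm1}s_1^ps_i^{\mp1}$ junctions to be the delicate step.
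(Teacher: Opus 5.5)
Your proposal is correct, but it takes a different route from the paper. The paper computes nothing: it observes that $\Sch(\beta_\Phi,S)$ is obtained from $\Sch(\alpha,S)$ by a $\Sch(\alpha(s_1))$-based edge slide. It then cites Miller--Tserunyan (Observation~3.7(a) and Lemma~3.8) for the fact that such slides preserve connectivity and acyclicity, and both $\cR_{\beta_\Phi}=\cR_\alpha$ and freeness follow.

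You instead obtain the orbit equality directly from $[\alpha(s_1)]=[\beta_\Phi(s_1)]$. You then prove freeness by a free-reduction analysis of the substituted word, which is in effect a hands-on proof of the special case of that lemma needed here. Your orientation check comes out as you expect:
\begin{itemize}
\item For $s_i^{-1}s_1^ps_i$, the inserted $s_1$-powers lie outside the pair, so the net exponent between the two letters is $p\neq0$.
\item For $s_is_1^ps_i^{-1}$, let $u$ be the point reached after $s_i^{-1}$ and $v=\alpha(s_1)^pu$. The net exponent is $k_i(v)+p-k_i(u)$, and $\phi_iv=\alpha(s_1)^{k_i(v)+p-k_i(u)}\phi_iu$. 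So its vanishing forces $\phi_iv=\phi_iu$, hence $u=v$ and $\alpha(s_1)^pu=u$, contradicting freeness of $\alpha(s_1)$.
\end{itemize}

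Two points you leave implicit should be stated:
\begin{itemize}
\item Checking only consecutive inverse pairs of non-$s_1$ letters suffices. Cancelling $s_1$-letters changes neither the sequence of non-$s_1$ letters nor the net $s_1$-exponent between consecutive ones, so the first cancellation of a non-$s_1$ letter would have to occur at such a pair.
\item You should work on an $\alpha$-invariant conull set on which $\alpha$ is free and each $\phi_i$ is a bijection preserving $\alpha(s_1)$-orbits. Then all contradictions are pointwise; the substituted word depends on $x$ but ranges over a countable set, so it is trivial for a.e.\ $x$ in the fixed-point set.
\end{itemize}

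The paper's route is shorter and places the construction in a general edge-sliding framework. Yours is self-contained and shows exactly where injectivity of $\phi_i$ and freeness of $\alpha(s_1)$ are used.
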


\begin{proof}
The Schreier graphing of $\beta_\Phi$ is obtained from that of $\alpha$ by sliding the $s_i$-labeled edge with origin $\phi_i(x)$ and terminus $\alpha(s_i)\phi_i(x)$ to the edge with origin $x$ and the same terminus for each $i\ge2$ and almost every $x\in X$. 
As $\phi_i\in[\alpha(s_1)]$, the two origins lie in a common $\alpha(s_1)$-orbit, so this is a $\Sch(\alpha(s_1))$-based edge slide of $\Sch(\alpha, S)$ in the sense of \cite[Definition~3.6]{MillerTserunyan}. 
Such edge slidings preserve connectivity and acyclicity by \cite[Observation~3.7(a) and Lemma~3.8]{MillerTserunyan}. So the resulting Schreier graphing $\Sch(\beta_\Phi,S)$ is acyclic with the same connected components as $\Sch(\alpha,S)$, finishing the proof. 
\end{proof}

\section{Genericity of weak mixing}
In this section, we fix a free pmp action $\alpha:\bF_n\acts(X,\mu)$ such that $\alpha(s_1)$ is mixing, together with a countable dense $\cA\subseteq\MALG$ collection of positive measure sets. For $w\in\bF_n$, $C>0$, and positive measure sets $A_0,A_1,B_0,B_1\subseteq X$, put
\[
\cO_w^C(A_0,A_1,B_0,B_1)=\bigcup_{N\ge1}\bigl\{\Phi\in[\alpha(s_1)]^{n-1}:\mu\bigl(\beta_\Phi(w)^NA_i\cap B_i\bigr)>C\mu(A_i)\mu(B_i)\text{ for }i\in\{0,1\}\bigr\}
\]
and let $\cE_w^C$ be the intersection of the sets $\cO_w^C(A_0,A_1,B_0,B_1)$ over all quadruples from $\cA$. By Proposition \ref{prop:approx}, the transformation $\beta_\Phi(w)$ is weak mixing for every $\Phi\in\cE_w^C$.

\begin{lemma}\label{lem:open}
Each $\cO_w^C(A_0,A_1,B_0,B_1)$ is open, and hence $\cE_w^C$ is a $G_\delta$ set.
\end{lemma}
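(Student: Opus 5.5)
The set $\cO_w^C(A_0,A_1,B_0,B_1)$ is a union over $N\ge1$, so it is enough to show that each set
\[
U_N=\bigl\{\Phi:\mu(\beta_\Phi(w)^NA_i\cap B_i)>C\mu(A_i)\mu(B_i)\text{ for }i\in\{0,1\}\bigr\}
\]
is open. Openness of $U_N$ will follow from two facts. First, $\Phi\mapsto\beta_\Phi(w)^N$ is continuous from $[\alpha(s_1)]^{n-1}$ to $\Aut(X,\mu)$ with the uniform metric $d_u$. Second, for fixed $A,B$ the map $T\mapsto\mu(TA\cap B)$ is continuous, indeed $1$-Lipschitz, with respect to $d_u$. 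For the second fact, note that $TA\cap B$ and $SA\cap B$ differ only on $S(\{x\in A:Sx\neq Tx\})\cup T(\{x\in A:Sx\neq Tx\})$. Hence
\[
\abs{\mu(TA\cap B)-\mu(SA\cap B)}\le\mu(\{x:Sx\neq Tx\})=d_u(S,T).
\]
So $U_N$ is a finite intersection of preimages of open subsets of $\bR$ under continuous maps, and is therefore open.

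For the first fact, write $w^N$ as a reduced word $t_1^{\epsilon_1}\cdots t_k^{\epsilon_k}$ in the generators. Then $\beta_\Phi(w^N)$ is the corresponding product of the transformations $\alpha(s_1)^{\pm1}$ and $(\alpha(s_i)\phi_i)^{\pm1}$. The uniform metric is bi-invariant on $\Aut(X,\mu)$: $d_u(RS,RT)=d_u(S,T)=d_u(SR,TR)$, using that $R$ is measure preserving for the left translation. It also satisfies $d_u(S^{-1},T^{-1})=d_u(S,T)$. From these we get $d_u(S_1S_2,T_1T_2)\le d_u(S_1,T_1)+d_u(S_2,T_2)$. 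Inductively this gives
\[
d_u\bigl(\beta_\Phi(w^N),\beta_\Psi(w^N)\bigr)\le\sum_{j}d_u\bigl(\beta_\Phi(t_j),\beta_\Psi(t_j)\bigr),
\]
and each summand is either $0$ (for $t_j=s_1$) or $d_u(\alpha(s_i)\phi_i,\alpha(s_i)\psi_i)=d_u(\phi_i,\psi_i)$. Thus the map is Lipschitz with constant $k$, using only the finitely many coordinates that occur in $w$. This is continuous for the product topology, which also covers the case $n=\infty$.

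The $G_\delta$ conclusion is then immediate. Since $\cA$ is countable, there are only countably many quadruples from $\cA$, so $\cE_w^C$ is a countable intersection of open sets.

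I expect no real obstacle here. The only point needing care is the bi-invariance and inversion-invariance of $d_u$ in the product estimate, and the observation that $\beta_\Phi(w)^N=\beta_\Phi(w^N)$ involves only finitely many of the $\phi_i$, which matters when $n=\infty$.
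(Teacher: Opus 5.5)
Your proposal is correct and follows the paper's proof, which consists of the same two facts: continuity of $\Phi\mapsto\beta_\Phi(w)^N$, because $w$ involves only finitely many generators, and the bound $\abs{\mu(SA\cap B)-\mu(S'A\cap B)}\le d_u(S,S')$. You additionally supply the bi-invariance details that the paper leaves implicit. One small precision: containment in $S(E_A)\cup T(E_A)$, where $E_A=\{x\in A:Sx\neq Tx\}$, by itself only gives Lipschitz constant $2$. The constant $1$ follows from the identity $\mu(TA\cap B)-\mu(SA\cap B)=\mu(T(E_A)\cap B)-\mu(S(E_A)\cap B)$, though any Lipschitz constant suffices for openness.
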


\begin{proof}
The word $w$ involves finitely many generators, so $\Phi\mapsto\beta_\Phi(w)^N$ is continuous, and $\abs{\mu(SA\cap B)-\mu(S'A\cap B)}\leq d_u(S,S')$ for all $S,S'\in\Aut(X,\mu)$. Each set in the above union is therefore open.
\end{proof}

\begin{lemma}\label{lem:density}
Let $w\in\bF_n$ be a nontrivial cyclically reduced word such that $s_j^{\pm1}$ occurs in $w$ for some $j\neq1$. Then $\cO_w^{C}(A_0,A_1,B_0,B_1)$ is dense in $[\alpha(s_1)]^{n-1}$ for all positive measure sets $A_0,A_1,B_0,B_1\subseteq X$ for some $C=C_w>0$.
\end{lemma}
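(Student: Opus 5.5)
Fix $\Phi^0=(\phi_2^0,\dots,\phi_n^0)\in[\alpha(s_1)]^{n-1}$ and $\varepsilon>0$. I will find $\Phi$ with $d_u(\phi_i,\phi_i^0)<\varepsilon$ for all $i$ and some $N\ge1$ such that $\mu(\beta_\Phi(w)^NA_i\cap B_i)>C\mu(A_i)\mu(B_i)$ for $i\in\{0,1\}$. By Lemma~\ref{lem:slide}, $\beta_{\Phi^0}$ is again a free action with $\beta_{\Phi^0}(s_1)=\alpha(s_1)$ mixing, and sliding twice along $[\alpha(s_1)]$ is again a slide. So we may replace $\alpha$ by $\beta_{\Phi^0}$ and assume $\Phi^0$ is the identity tuple. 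We will then perturb only $\phi_j$, where $j\ne1$ is a generator occurring in $w$, and only on a set of measure less than $\varepsilon$.

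\textbf{Step 1: produce a long power of $s_1$ inside $\beta_\Phi(w)$.} Write $w$ cyclically reduced. For a large integer $M$, the idea is to replace one occurrence of $s_j^{\pm1}$ by something that behaves like $s_1^{M}s_j^{\pm1}$ along most of the orbit. Take $\phi_j=\alpha(s_1)^{M}$ restricted to a small set $D$ and the identity off $D$. To make this an element of the full group $[\alpha(s_1)]$, I use a Rokhlin tower for $\alpha(s_1)$ of height $h\gg M$ and total measure $<\varepsilon$. On the base levels I let $\phi_j$ map level $k$ to level $k+M$, and I complete it to a bijection by a cyclic permutation inside the tower. Then $d_u(\phi_j,\id)<\varepsilon$.

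The cost is that only points passing through $D$ see the long $s_1^M$ segment, so the effect is too small. The alternative I would actually pursue is this: choose $D$ so that $\beta_\Phi(w)^N$ restricted to a set $E$ of measure proportional to $\varepsilon$ equals $u\,\alpha(s_1)^{M}\,v$, where $u,v$ are fixed words of bounded length, independent of $M$. The window $E$ is arranged using the tower together with the freeness of the action; freeness lets us make the $w$-orbit segments pass through $D$ exactly once at a controlled time.

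\textbf{Step 2: use mixing of $\alpha(s_1)$.} For large $M$ we have $\mu(\alpha(s_1)^{M}vA\cap u^{-1}B)\approx\mu(A)\mu(B)$, uniformly over the finitely many sets involved, and this holds for both $i=0,1$ simultaneously. The remaining difficulty is to keep the mass of this intersection rather than only its $\mu(E)$-fraction. For that, the condition $\mu(T^NA\cap B)>C\mu(A)\mu(B)$ should be obtained by making $E$ of nearly full measure after iterating $N\approx 1/\varepsilon$ times. Concretely: the trajectory $x,\beta(w)x,\dots,\beta(w)^Nx$ enters $D$ with frequency about $\varepsilon$. So with $N$ chosen near the return time, each trajectory passes through exactly one modified edge with probability bounded below, say $\ge1/e$ by a Poisson-type count. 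This gives $C_w$ on the order of $1/(e\cdot|w|)$, depending only on $w$.

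\textbf{Main obstacle.} The hard step is controlling the independence and the exactly-one-hit structure. $D$ must be placed so that $\beta_\Phi(w)^N$ applied to a typical point crosses a modified $s_j$-edge exactly once. We also need the parts of the trajectory before and after that crossing to be essentially unaffected, so that they are given by unmodified words whose action on $A_i,B_i$ is fixed and can be handled by mixing.

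I would first try choosing $D$ randomly, as a union of tower levels at a sparse random set of heights, and averaging over the choice. Mixing of $\alpha(s_1)$ then decorrelates the pre-crossing and post-crossing pieces, giving $\mathbb{E}\,\mu(\beta_\Phi(w)^NA_i\cap B_i)\ge c\,\mu(A_i)\mu(B_i)$. A second-moment or Markov-type argument then gives a single choice of $D$ that works for both $i=0,1$ at once.
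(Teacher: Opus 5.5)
Your global strategy matches the paper's. You perturb only $\phi_j$ on a set $D$ of measure $\approx 1/(mN)<\varepsilon$. The $w^N$-paths starting in $\bigsqcup_{r<N}T^{-r}D$, a set of measure $N\mu(D)$ bounded below independently of $\varepsilon$, then each cross exactly one modified $s_j$-edge, where they are rerouted by $\alpha(s_1)^L$, and mixing of $\alpha(s_1)$ decorrelates the two halves. But there is a genuine gap: the step you label the main obstacle carries essentially all the content of the lemma, and your proposed device does not deliver it.

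Exactly-one-hit, and disjointness of the $T^{-r}D$, require that $D$ contain no pair $x,gx$ for each of the finitely many maps $g=R_q^{-1}\alpha(s_1)^{-\epsilon_0L}R_pT^kR_q^{-1}\alpha(s_1)^{\epsilon_1L}R_q$ with $p\in P$, $\abs{k}\le N$, $\epsilon_0,\epsilon_1\in\{0,1\}$. These are fixed-point-free outside the trivial cases, by freeness and the choice of $L$. They are built from $T$ and the $R_p$, which have nothing to do with a Rokhlin tower for $\alpha(s_1)$. Nothing in your construction prevents $x$ and $gx$ (say $g=T$) from lying in the same level on a set of substantial measure. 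For such $x$, a random union of levels gives $\Pr[x\in D,\ gx\in D]=\rho$ rather than $\rho^2$, so neither averaging nor a second-moment argument removes the conflicts. Your Poisson count presupposes exactly this near-independence.

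Moreover, $D$ necessarily depends on $L$, so the global mixing estimate must be transferred to $D$. What is needed is $\int_DF_i\ge c\,\mu(D)\int_XF_i$, with $\mu(D)\asymp 1/(mN)$, simultaneously for $i=0,1$. Here $F_i=\sum_{r<N}(\ind_{A_i}\circ T^{-r})(\ind_{B_i}\circ T^{N-r}R_q^{-1}\alpha(s_1)^LR_q)$ sums over the crossing time $r$; your words $u,v$ depend on $r$.

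The paper supplies this by recording the conflicts as a Borel graph $\cG$ of degree $<25mN$. It then invokes Hatami--Lov\'asz--Szegedy (or Ab\'ert--Weiss) to get a measurable coloring whose local statistics mimic i.i.d.\ colors. The points colored $1$ with no neighbor colored $1$ form a $\cG$-independent set $D$ with $\mu(D)<\frac1{100mN}$ and $\int_DF_i>\frac1{1000mN}\int_XF_i$, which gives $C_w=\frac1{2000m}$.

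Your randomize-and-concentrate instinct can be made to work elementarily, but only if you randomize over a fine partition into $\cG$-independent pieces rather than over tower levels. A refinement of a Borel proper coloring of $\cG$ would do, since then $\Pr[x\in D,\ gx\in D]=\rho^2$.

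Finally, defining $\phi_j$ as a shift by $M$ inside a tower with cyclic completion creates further modified edges at the column tops, which your analysis never accounts for. The paper instead uses the involution $V$ exchanging $z$ and $\alpha(s_1)^Lz$ for $z\in R_qD$. It lies in $[\alpha(s_1)]$ and needs only $R_qD\cap\alpha(s_1)^LR_qD=\emptyset$, which itself follows from $\cG$-independence of $D$.
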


\begin{proof}
Fix $j$ so that $s_j^{\pm1}$ occurs in $w$ and let $m=m_j\ge 1$ denote the number of occurrences. 
Fix $\Phi=(\phi_i)_{2\le i\le n}\in[\alpha(s_1)]^{n-1}$ and a neighborhood $\cU$ of $\Phi$. Fix $\varepsilon>0$ such that every tuple agreeing with $\Phi$ off the $j$-th coordinate and whose $j$-th coordinate is within $\varepsilon$ of $\phi_j$ in the uniform metric lies in $\cU$.

Write $w=t_\ell\cdots t_1$ as a reduced word in the generators and put $T:=\beta_\Phi(w)$. Let $P$ be the set of $1\le p\le \ell$ with $t_p\in\{s_j,s_j^{-1}\}$. Set $q=\min P$ to be the least element. For $p\in P$,
\[
R_p:=
\begin{cases}\beta_\Phi(t_{p-1}\cdots t_1)& \text{ if }t_p=s_j\\ 
\beta_\Phi(s_j^{-1}t_{p-1}\cdots t_1)&\text{ if }t_p=s_j^{-1}
\end{cases}
\]
so that $R_px$ is the origin of the positively oriented $s_j$-edge traversed at the $p$-th letter of the $w$-path based at $x$. Since $w$ is cyclically reduced, the bi-infinite path labelled by $\cdots www\cdots$ is a geodesic in the Schreier graphing of $\beta_\Phi$ so the positively oriented $s_j$-edges along it are pairwise distinct. As $\beta_\Phi$ is free by Lemma \ref{lem:slide}, this gives $R_q^{-1}R_pT^k=\id$ only for $(p,k)=(q,0)$.

Fix $N\ge1$ with $\frac2{mN}<\varepsilon$. For $L\ge1$ and $i\in\{0,1\}$, let
\[
F_{i,L}:=\sum_{r=0}^{N-1}\big(\ind_{A_i}\circ T^{-r}\big)\big(\ind_{B_i}\circ T^{N-r}R_q^{-1}\alpha(s_1)^LR_q\big).
\]
After the change of variables $y=R_qx$, the integral of the $r$-th summand of $F_{i,L}$ equals $\mu\bigl(R_qT^rA_i\cap \alpha(s_1)^{-L}R_qT^{-(N-r)}B_i\bigr)$, so mixing of $\alpha(s_1)$ gives $\int_XF_{i,L}\,d\mu>\frac N2\mu(A_i)\mu(B_i)$ for both $i$ once $L$ is large enough. Freeness of $\alpha$ makes the transformations $R_q^{-1}\alpha(s_1)^LR_q$ pairwise distinct, so we may in addition require that $R_q^{-1}\alpha(s_1)^{\pm L}R_q$ avoids the finite set $\{R_q^{-1}R_pT^k:p\in P,\ \abs k\leq N\}$. Fix such an $L$ and write $F_i:=F_{i,L}$.

Define an auxiliary pmp (undirected) Borel graph $\cG$ on $(X,\mu)$ that records conflicts in the following manner: a distinct pair $(y,x)\in X^2$ is set to be adjacent in $\cG$ if 
\[
y=R_q^{-1}\alpha(s_1)^{-\epsilon_0L}R_pT^kR_q^{-1}\alpha(s_1)^{\epsilon_1L}R_qx
\]
for some $p\in P$, $|k|\le N$, and $\epsilon_0,\epsilon_1\in\{0,1\}$. These conflicts are chosen mainly so that they cover the set of pairs $(y,x)$ such that if we slide both the $s_j$ edge located at $R_qy$ and at $R_qx$ by $\alpha(s_1)^L$, there is a path of the form $w^N$ starting at some vertex $z$ that follows both altered edges. Simple counting gives that the degree of $\cG$ is uniformly bounded above by $8m(2N+1)<25mN$. By the choice of $L$ and freeness, the transformation $(R_q^{-1}\alpha(s_1)^{-\epsilon_0L}R_q)(R_q^{-1}R_pT^k)(R_q^{-1}\alpha(s_1)^{\epsilon_1L}R_q)$ is the identity only when $p=q$, $k=0$, and $\epsilon_0=\epsilon_1$.

Since $F_0,F_1$ both take values in $\{0,\dots, N\}$, by Hatami--Lov\'asz--Szegedy \cite[Lemma~7.9]{HLS} (see also Ab\'ert--Weiss \cite{AbertWeiss} or Tucker-Drob \cite{TuckerDrob:WeakContainment}), we may find a measurable coloring $c: X\to \{1,\dots, 100mN\}$ such that the joint distribution of the radius-one neighborhoods of $\cG$ as colored by $(F_0,F_1, c)$ is arbitrarily close to the distribution obtained by assigning independent uniform colors in $\{1,\dots, 100mN\}$ in addition to the coloring $(F_0,F_1)$. 
If this approximation is sufficiently close, then the set $D=\{x\in X:c(x)=1 \text{ and }c(y)\not=1 \text{ for every }y\mathrel{\cG} x\}$
is a $\cG$-independent set satisfying $\mu(D)<\frac{1}{100mN}$ and 
\begin{align*}
\int_D F_i\;d\mu&>\frac{1}{100mN}(1-\frac{1}{100mN})^{25mN}\int_XF_i\;d\mu\\
&\ge\frac{1}{100mN}(1-\frac{25mN}{100mN})\int_X F_i\;d\mu>\frac{1}{1000mN}\int_XF_i\;d\mu
\end{align*}
for $i\in \{0,1\}$. Fix such a set $D\subseteq X$.

Independence of $D$ gives that $R_qD$ and $\alpha(s_1)^LR_qD$ are disjoint. Let $V\in[\alpha(s_1)]$ be the involution interchanging $x$ and $\alpha(s_1)^Lx$ for $x\in R_qD$ and fixing every other point. Then $\mu(\supp V)=2\mu(D)<\frac{1}{50mN}<\varepsilon$
so replacing $\phi_j$ by $\phi_jV$ and leaving the remaining coordinates unchanged gives the tuple $\Phi'=(\phi_2,\dots, \phi_jV, \dots, \phi_n)\in\cU$.

It follows that for every $x\in D$ and $0\leq r<N$,
\[
\beta_{\Phi'}(w)^N T^{-r}x
=
T^{N-r}R_q^{-1}\alpha(s_1)^L R_qx
\]
via unwrapping the definition of $V$ and $D$. 
Consider first the old $w^N$-path from $T^{-r}x$ up to $T^{N-r}x$. 
The path passes through $R_qx$ at which point $\beta_{\Phi'}$ now acts via the involution $V$ and follows another $w^N$ path from $\alpha(s_1)^LR_qx$. 
Indeed, every occurrence of an origin of an $s_j$ edge (other than $\alpha(s_1)^LR_qx$) along either path from $T^{-r}x$ to $R_qx$ or the path from $\alpha(s_1)^LR_qx$ to $T^{N-r}R_q^{-1}\alpha(s_1)^LR_qx$ avoids $\supp V=R_qD\cup \alpha(s_1)^LR_qD$ by independence of $D$.

The sets $T^{-r}D$, $0\leq r<N$, are pairwise disjoint by independence of $D$ which implies
\begin{align*}
\mu\big(\beta_{\Phi'}(w)^N A_i\cap B_i\big)&\ge\sum_{r=0}^{N-1}\mu\big(T^rA_i\cap T^r\beta_{\Phi'}(w)^{-N}B_i\cap D\big)\\
&= \int_D \sum_{r=0}^{N-1}\big(\ind_{A_i}\circ T^{-r}\big)\big(\ind_{B_i}\circ T^{N-r}R_q^{-1}\alpha(s_1)^LR_q\big)\;d\mu\\
&> \frac{1}{1000mN}\int_X F_i\;d\mu> \frac{1}{2000m}\mu(A_i)\mu(B_i)
\end{align*}
for $i\in\{0,1\}$ where the last inequality follows from the choice of $L$.
Thus $\Phi'\in\cU\cap\cO_w^{C}(A_0,A_1,B_0,B_1)$ for $C=\frac{1}{2000m}$ completing the proof since $m$ depends only on the choice of $w$ and $j$.
\end{proof}

\begin{proof}[Proof of Theorem \ref{thm:main}]
Let $\alpha:\bF_n\acts(X,\mu)$ be a free ergodic pmp action. By \cite[Theorem 1.3]{MillerTserunyan} and Dye's Theorem \cite{Dye}, we may replace $\alpha$ by an action with the same orbit equivalence relation and assume that $\alpha(s_1)$ is mixing.

Fix $w\in\bF_n\setminus\{e\}$ and write $w=uvu^{-1}$ with $v$ cyclically reduced. Since $\beta_\Phi(w)$ and $\beta_\Phi(v)$ are conjugate in $\Aut(X,\mu)$, the transformation $\beta_\Phi(w)$ is weak mixing if and only if $\beta_\Phi(v)$ is. If $v=s_1^k$ for some $k\neq0$ then $\beta_\Phi(v)=\alpha(s_1)^k$ is mixing for every $\Phi$. Otherwise $v$ involves some generator $s_j^{\pm1}$ with $j\neq1$, and Lemmas \ref{lem:open} and \ref{lem:density} along with Baire category show that $\cE_v^{C_v}$ is a comeager subset of $[\alpha(s_1)]^{n-1}$. By Proposition \ref{prop:approx}, $\beta_\Phi(v)$ is weak mixing for every $\Phi\in \cE_v^{C_v}$. In either case, the set of $\Phi$ so that $\beta_\Phi(w)$ is weak mixing is comeager for every $w\in \bF_n$, and so we may find a $\Phi$ so that $\beta_\Phi$ is totally weak mixing. By Lemma \ref{lem:slide}, $\beta_\Phi$ is a free totally weak mixing pmp action with $\cR_\alpha=\cR_{\beta_\Phi}$, completing the proof.
\end{proof}

\bibliography{references}
\bibliographystyle{alphaurl}

\end{document}